\def\CC{{\mathbb C}}
\def\GG{{\mathbb G}}
\def\QQ{{\mathbb Q}}
\def\PP{{\mathbb P}}
\def\QQ{{\mathbb Q}}
\def\RR{{\mathbb R}}
\def\ZZ{{\mathbb Z}}
\def\Psf{{\mathsf P}}
\def\hhat{{\hat h}}
\def\0{{\mathbf 0}}
\def\1{{\mathbf 1}}
\def\w{{\mathbf w}}
\def\z{{\mathbf z}}
\def\Ecal{{\mathcal E}}
\def\Fcal{{\mathcal F}}
\def\Jcal{{\mathcal J}}
\def\Mcal{{\mathcal M}}
\def\Ocal{{\mathcal O}}
\def\Pcal{{\mathcal P}}
\def\Zcal{{\mathcal Z}}
\def\Kbar{{\bar K}}
\def\diag{\mathrm{diag}}
\def\Gal{\mathrm{Gal}}
\def\supp{\mathrm{supp}}
\theoremstyle{plain}
\newtheorem{thm}{Theorem}
\newtheorem{prop}[thm]{Proposition}
\newtheorem{lem}[thm]{Lemma}
\newtheorem{conj}{Conjecture}
\begin{document}

\title[$S$-Integral Preperiodic Points]{$S$-Integral Preperiodic Points for Dynamical Systems over Number Fields}
\author{Clayton Petsche}
\address{Ph.D. Program in Mathematics; CUNY Graduate Center; 365 Fifth Avenue; New York, NY 10016-4309 U.S.A.}
\email{cpetsche@gc.cuny.edu}
\date{September 11th, 2007}
\subjclass[2000]{11G50, 11R04} 
\begin{abstract}
Given a rational map $\phi:\PP^1\to\PP^1$ defined over a number field $K$, we prove a finiteness result for $\phi$-preperiodic points which are $S$-integral with respect to a non-preperiodic point $P$, provided $P$ satisfies a certain local condition at each place.  This verifies a special case of a conjecture of S. Ih.
\end{abstract}

\maketitle


\section{Introduction}

Let $K$ be a number field with algebraic closure $\Kbar$, let $S$ be a finite set of places of $K$ including all of the archimedean places, and let $\Ocal_S$ denote the ring of $S$-integers in $K$.  Given two points $P,Q\in\PP^1(\Kbar)$, we say that $Q$ is $S$-integral with respect to $P$ if the Zariski closures of $P$ and $Q$ do not meet in the model $\PP^1_{\Ocal_S}$ for $\PP^1$.  Intuitively, this means that $P$ and $Q$ have distinct reduction with respect to each place of $\Kbar$ lying over the places of $K$ outside $S$.

Let $\phi:\PP^1\to\PP^1$ be a rational map of degree at least two defined over $K$.  A point $P\in\PP^1(\Kbar)$ is called periodic (with respect to $\phi$) if $\phi^n(P)=P$ for some $n\geq1$, where $\phi^n$ denotes the map $\phi$ composed with itself $n$ times.  More generally, $P$ is called preperiodic if some iterate $\phi^n(P)$ of it is periodic.  By analogy with several well-known results in Diophantine geometry, S. Ih has conjectured the following finiteness property for $S$-integral preperiodic points.

\begin{conj}[Ih]\label{IhConj}
Let $K$ be a number field, let $\phi:\PP^1\to\PP^1$ be a rational map of degree at least two defined over $K$, and let $S$ be a finite set of places of $K$ including all of the archimedean places.  Let $P\in\PP^1(\Kbar)$ be a non-preperiodic point.  Then the set of preperiodic points in $\PP^1(\Kbar)$ which are $S$-integral with respect to $P$ is finite.
\end{conj}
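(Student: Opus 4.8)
The plan is to run an arithmetic equidistribution argument and to pinpoint the one step that resists it. Write $d=\deg\phi\geq 2$ and, for a point $x\in\PP^1(\Kbar)$, let $O_x$ denote its Galois orbit.

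\textbf{Step 1.} Suppose the set in question is infinite and choose distinct preperiodic points $Q_1,Q_2,\dots$, each $S$-integral with respect to $P$. Preperiodic points satisfy $\hat h_\phi=0$, so by the Northcott property they cannot all have bounded degree; after passing to a subsequence, $[K(Q_n):K]=\#O_{Q_n}\to\infty$. Enlarging $S$ harmlessly, assume every $v\notin S$ is a place of good reduction for $\phi$ and for a chosen model of $P$.

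\textbf{Step 2.} For each place $v$ of $K$, fixed once and for all on $\Kbar$, let $g_{\phi,v}\colon \PP^1_{\Berk,v}\times\PP^1_{\Berk,v}\to\RR\cup\{+\infty\}$ be the Arakelov--Green's function of the canonical measure $\mu_{\phi,v}$: it is symmetric, satisfies $\Delta_x g_{\phi,v}(x,\cdot)=\delta_x-\mu_{\phi,v}$, is normalized by $\int g_{\phi,v}(x,\cdot)\,d\mu_{\phi,v}=0$, and for $v$ of good reduction satisfies $g_{\phi,v}\geq 0$ with $g_{\phi,v}(x,y)=0$ precisely when $x$ and $y$ have distinct reduction. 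Packaging the $g_{\phi,v}$ into an adelic metric on $\Ocal_{\PP^1}(1)$, in the manner of Zhang and Chambert-Loir, produces the global height pairing
\[
\langle P,Q\rangle\;:=\;\sum_v n_v\Bigl(\tfrac{1}{\#O_P\,\#O_Q}\sum_{P'\in O_P,\,Q'\in O_Q} g_{\phi,v}(P',Q')\Bigr),
\]
which has finitely many nonzero terms for $P\neq Q$ with disjoint orbits, and which equals $\hat h_\phi(P)+\hat h_\phi(Q)$; the identity is forced by the functional equation $g_{\phi,v}(\phi(x),y)=\sum_{\phi(x')=y}g_{\phi,v}(x,x')$, Tate telescoping, and the Northcott property.

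\textbf{Step 3.} Since each $Q_n$ is preperiodic, $\langle P,Q_n\rangle=\hat h_\phi(P)$. Since $Q_n$ is $S$-integral with respect to $P$, every conjugate of $Q_n$ has distinct reduction from every conjugate of $P$ at every place over each $v\notin S$, and good reduction there forces the corresponding terms $g_{\phi,v}(P',Q'_n)$ to vanish. Thus only places in $S$ contribute:
\[
\sum_{v\in S} n_v\Bigl(\tfrac{1}{\#O_P\,\#O_{Q_n}}\sum_{P',Q'_n} g_{\phi,v}(P',Q'_n)\Bigr)\;=\;\hat h_\phi(P)\;>\;0\qquad\text{for every }n,
\]
the strict inequality holding because $P$ is not preperiodic.

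\textbf{Step 4 (the obstacle).} For each $v$ the Galois orbits of the $Q_n$ equidistribute on $\PP^1_{\Berk,v}$ toward $\mu_{\phi,v}$ (Favre--Rivera-Letelier; Baker--Rumely), since $\hat h_\phi(Q_n)=0$ and $\#O_{Q_n}\to\infty$. If the limit could be carried through $g_{\phi,v}(P',\cdot)$, each inner average would converge to $\int g_{\phi,v}(P',y)\,d\mu_{\phi,v}(y)=0$, the left-hand side of the previous display would tend to $0$, and $\hat h_\phi(P)>0$ would be contradicted. The difficulty is exactly the logarithmic pole of $g_{\phi,v}(P',\cdot)$ at $y=P'$: weak-$*$ convergence and lower semicontinuity deliver only $\liminf\geq 0$ of each term, which is consistent with the equality and gives nothing. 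The interchange is legitimate as soon as no appreciable cluster of conjugates of $Q_n$ approaches a conjugate of $P$ faster than polynomially in $\#O_{Q_n}$ --- quantitatively, as soon as $-\log\|P',Q'_n\|_v=o(\#O_{Q_n})$ at each $v\in S$ --- and establishing this is the crux. It is a dynamical Roth/Liouville problem: bound from below how well a preperiodic point of degree $\approx d^k$, i.e.\ a root of the bounded-height polynomial $\phi^k(z)-\phi^{k+\ell}(z)$, can approximate the fixed non-preperiodic point $P$ at an archimedean (or other $S$-)place. A bare Liouville estimate only gives $-\log\|P',Q'_n\|_v=O(\#O_{Q_n})$, off by precisely the averaging factor, so the required bound is genuinely delicate, and I expect this to be the main obstruction. (If one grants, at each place $v$, that $P$ lies in the Fatou set of $\phi$ and is not a $v$-adic accumulation point of the preperiodic points, then only finitely many conjugates of any $Q_n$ enter a fixed neighborhood of a conjugate of $P$, and they stay bounded away from it, so $g_{\phi,v}(P',\cdot)$ is uniformly bounded along the relevant points, the interchange is valid, and the contradiction follows --- which is the conditional theorem established in this paper.)
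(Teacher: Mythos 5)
The statement you were assigned is a conjecture, not a theorem, and the paper does not prove it; your proposal rightly does not claim to either. Steps 1--3 accurately reproduce the adelic decomposition $\hhat_\phi(P)+\hhat_\phi(Q_n)=\sum_v(\cdots g_{\phi,v}\cdots)$ underlying the paper's argument (Proposition \ref{HeightProp}(c)), and Step 4 names the genuine obstruction: the logarithmic pole of $g_{\phi,v}(P',\cdot)$ at $P'$ means it is not an admissible test function for Theorem \ref{EquidistThm}, and lower semicontinuity alone yields only $\liminf\geq 0$, which is useless. Your diagnosis that a dynamical Roth/Liouville-type lower bound on $\delta_v(P',Q_n')$ at each $v\in S$ would close the gap is also accurate; in the special families of Baker--Ih--Rumely this is supplied by linear forms in logarithms, and in general it is open.

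Your closing parenthetical matches the paper's Theorem \ref{MainThm}, with one refinement worth noting. You list ``$P$ lies in the Fatou set at $v$'' and ``$P$ is not a $v$-adic accumulation point of preperiodic points'' as if these were two separate hypotheses; the paper's Proposition \ref{DiscThm} shows the second is a consequence of the first --- via finiteness of Fatou periodic points when $v$ is archimedean, and via Rivera-Letelier's classification of Fatou components (wandering discs, attracting basins, quasiperiodicity domain) when $v$ is non-archimedean --- and this deduction is one of the paper's main contributions. The paper also uses $\supp(\mu_{\phi,v})\cap\Fcal_v(\phi)=\emptyset$ so that $F(\cdot)=\tfrac{1}{|\Pcal|}\sum_{Q\in\Pcal}g_{\phi,v}(\cdot,\epsilon_v(Q))$ can be truncated to a bounded continuous $F^*$ agreeing with $F$ on all preperiodic points and on $\supp(\mu_{\phi,v})$, leaving both the equidistribution averages and the limiting integral $\int F\,d\mu_{\phi,v}=0$ unchanged, and Theorem \ref{EquidistThm} then applies to $F^*$. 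With Proposition \ref{DiscThm} added, the conditional argument you sketch is essentially the paper's proof; the full conjecture is not accessible to any purely local argument of this kind, a point the paper itself makes explicitly.
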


Baker-Ih-Rumely \cite{BakerIhRumely} have proved this conjecture in the following cases: {\bf (a)} $\phi(z)=z^n$ for some $n\geq2$, where $z$ denotes the standard affine coordinate on $\PP^1$;  {\bf (b)} $\phi$ is a Latt\'es map associated to an elliptic curve $E/K$;  and {\bf (c)} $\phi$ is a Chebyshev map (although in this latter case they do not work out the details in \cite{BakerIhRumely}).  In all three of these cases the map $\phi$ is defined via the multiplication-by-$n$ map on a commutative algebraic group $G$ (either $\GG_m$ or $E$), in the latter two cases by replacing $G$ with a quotient of itself by some nontrivial automorphism.  The $\phi$-preperiodic points on $\PP^1$ are thus closely related to the torsion points on the group $G$.

In this paper we will prove a special case of Conjecture \ref{IhConj} which holds for an arbitrary map $\phi$, but which requires certain local conditions on the non-preperiodic point $P\in\PP^1(\Kbar)$.  For each place $v$ of $K$, the map $\phi:\PP^1(\CC_v)\to\PP^1(\CC_v)$ determines a decomposition $\PP^1(\CC_v)=\Fcal_v(\phi)\cup\Jcal_v(\phi)$ of the projective line into two disjoint subsets, the the $v$-adic Fatou set $\Fcal_v(\phi)$ and Julia set $\Jcal_v(\phi)$.  We will recall the definitions of these two sets in $\S$\ref{FatJulSect}, but intuitively one thinks of the dynamical behavior of $\phi$ as being ``stable'' on the Fatou set, and ``chaotic'' on the Julia set.  Given a point $P\in\PP^1(\Kbar)$ and a place $v$ of $K$, we say that $P$ is a totally Fatou point at $v$ (with respect to $\phi$) if each of the $[K(P):K]$ embeddings of $P$ into $\PP^1(\CC_v)$ is contained in the Fatou set $\Fcal_v(\phi)$.

\begin{thm}\label{MainThm}
Let $K$, $S$, $\phi:\PP^1\to\PP^1$, and $P\in\PP^1(\Kbar)$ satisfy the same hypotheses as in Conjecture \ref{IhConj}.  Assume in addition that $P$ is a totally Fatou point at all places $v$ of $K$.  Then the set of preperiodic points $\PP^1(\Kbar)$ which are $S$-integral with respect to $P$ is finite.
\end{thm}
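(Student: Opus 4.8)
The plan is to argue by contradiction, combining the arithmetic equidistribution theorem for points of small canonical height with the decomposition of the canonical height $\hhat_\phi$ into local Arakelov--Green's contributions; this is the circle of ideas underlying \cite{BakerIhRumely}. Suppose the conclusion fails, and fix an infinite sequence of distinct preperiodic points $Q_1,Q_2,\dots\in\PP^1(\Kbar)$, each $S$-integral with respect to $P$. Since $\hhat_\phi$ vanishes exactly on the preperiodic points, $\hhat_\phi(Q_n)=0$ for all $n$, and by Northcott's theorem the degrees $[K(Q_n):K]$ are unbounded, so after passing to a subsequence we may assume $[K(Q_n):K]\to\infty$. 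Enlarging $S$ only enlarges the set in question, so we may also assume $S$ contains every place at which $\phi$ has bad reduction (and it still contains the archimedean places, and $P$ is still totally Fatou everywhere).

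For each place $v$ of $K$ let $\mu_{\phi,v}$ be the canonical probability measure on the Berkovich line $\PP^1_{\Berk,v}$ attached to $\phi$; it is supported on $\Jcal_v(\phi)$, and it has a canonical Arakelov--Green's function $g_{\phi,v}(x,y)$, symmetric in $x,y$, differing from $-\log\|x,y\|_v$ (minus the log of the $v$-adic chordal distance) by a continuous function of $(x,y)$ that vanishes for all $v\notin S$, and normalized so that $\int g_{\phi,v}(x,\cdot)\,d\mu_{\phi,v}=0$ for every $x$. Writing $\langle x,y\rangle_v$ for the $v$-adic term obtained by averaging $g_{\phi,v}$ over the Galois orbit of $(x,y)$ with the standard local-degree normalization, one has the fundamental identity
\[
\hhat_\phi(x)+\hhat_\phi(y)=\sum_v\langle x,y\rangle_v\qquad(x\neq y\text{ in }\PP^1(\Kbar)),
\]
which expresses that $\hhat_\phi$ is the height attached to the canonical adelic metric on $\Ocal_{\PP^1}(1)$. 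Applying this with $x=P$, $y=Q_n$ and using $\hhat_\phi(Q_n)=0$ gives $\hhat_\phi(P)=\sum_v\langle P,Q_n\rangle_v$. If $v\notin S$, then $S$-integrality forces the conjugates of $P$ and of $Q_n$ to have distinct reduction at every place over $v$, i.e.\ chordal distance $1$, so $-\log\|\,\cdot\,,\cdot\,\|_v$ vanishes there; together with the vanishing of the correction term this gives $\langle P,Q_n\rangle_v=0$. Hence $\hhat_\phi(P)=\sum_{v\in S}\langle P,Q_n\rangle_v$ for every $n$.

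Now fix $v\in S$ and analyze $\lim_n\langle P,Q_n\rangle_v$. By hypothesis each of the finitely many conjugates $P^\tau$ of $P$ lies in $\Fcal_v(\phi)$, hence in the Berkovich Fatou set, which is open and disjoint from $\supp\mu_{\phi,v}=\Jcal_v(\phi)$; moreover $\Fcal_v(\phi)$ contains no accumulation point of the set of $\phi$-preperiodic points (their accumulation set is $\Jcal_v(\phi)$, by the Fatou--Julia theory over $\CC_v$). So we may choose a neighborhood $U$ of the conjugates of $P$ in $\PP^1_{\Berk,v}$ whose closure meets neither $\Jcal_v(\phi)$ nor any preperiodic point; then every conjugate of every $Q_n$ and all of $\supp\mu_{\phi,v}$ avoid $U$, while each $g_{\phi,v}(P^\tau,\cdot)$ is continuous and bounded on the compact set $\PP^1_{\Berk,v}\setminus U$. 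By the arithmetic equidistribution theorem (in its Berkovich form at the finite places), the Galois orbits of $Q_n$ over $K(P)$ — whose sizes still tend to $\infty$ — equidistribute toward $\mu_{\phi,v}$ in $\PP^1_{\Berk,v}$; applying this to the functions $g_{\phi,v}(P^\tau,\cdot)$ and averaging over the embeddings $\tau\colon K(P)\hookrightarrow\CC_v$ yields
\[
\lim_{n\to\infty}\langle P,Q_n\rangle_v=\frac{1}{[K(P):K]}\sum_\tau\int_{\PP^1_{\Berk,v}}g_{\phi,v}(P^\tau,y)\,d\mu_{\phi,v}(y)=0,
\]
the last equality being the normalization of $g_{\phi,v}$. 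Summing over the finite set $S$ gives $\hhat_\phi(P)=\lim_n\sum_{v\in S}\langle P,Q_n\rangle_v=0$, contradicting $\hhat_\phi(P)>0$, which holds because $P$ is not preperiodic. This completes the argument.

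The step requiring the hypothesis that $P$ is \emph{totally} Fatou at \emph{every} place — and the one I expect to be the main obstacle — is the passage to the limit in the last display: $g_{\phi,v}(P^\tau,\cdot)$ has a logarithmic pole at $P^\tau$, so weak-$*$ convergence of the orbit measures does not by itself control its integrals. The totally Fatou hypothesis places every pole in the Berkovich Fatou set, away from $\supp\mu_{\phi,v}$, and the fact that preperiodic points cannot cluster at a Fatou point keeps the equidistributing orbits uniformly away from the poles, reducing everything to integrating a genuinely continuous function. Assembling the prerequisites — the construction of $\mu_{\phi,v}$ and $g_{\phi,v}$ on $\PP^1_{\Berk,v}$ with their basic properties, the height identity above, the non-accumulation statement for preperiodic points, and the Berkovich-space equidistribution theorem — is where most of the work lies; granted these, the deduction is as above.
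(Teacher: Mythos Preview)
Your proposal is correct and follows essentially the same route as the paper: contradiction via the height decomposition $\hhat_\phi(P)=\sum_v\langle P,Q_n\rangle_v$, vanishing of the $v\notin S$ terms by $S$-integrality plus good reduction, and for $v\in S$ the equidistribution theorem applied after truncating $g_{\phi,v}(P^\tau,\cdot)$ near the poles, which is justified exactly by the two facts you single out---preperiodic points are discrete in $\Fcal_v(\phi)$ (the paper's Proposition~\ref{DiscThm}, proved via Rivera-Letelier's classification in the non-archimedean case) and $\supp(\mu_{\phi,v})\cap\Fcal_v(\phi)=\emptyset$. The paper organizes the Galois averaging slightly differently (working over $K$ with $\Gal(\Kbar/K)$-stable sets rather than over $K(P)$), but the argument is the same.
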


If $v$ is an archimedean place, then there are examples of rational maps $\phi$ for which the Fatou set $\Fcal_v(\phi)$ is empty; in this case it is impossible for any point to be totally Fatou at $v$.  However, in all other cases $\Fcal_v(\phi)$  is ``large'' in the sense that it is an open, dense subset of $\PP^1(\CC_v)$, and thus there is a sense in which totally Fatou points are generic.  Moreover, if $v$ is non-archimedean and $\phi$ has good reduction at $v$, then $\Fcal_v(\phi)=\PP^1(\CC_v)$.  In particular, every point in $\PP^1(\Kbar)$ is totally Fatou at every place of $K$ outside of a finite set of places depending only on $\phi$.

\medskip

{\em Example 1.}  Let $K=\QQ$, let $S$ contain only the archimedean place, and identify $\PP^1(\bar{\QQ})=\bar{\QQ}\cup\{\infty\}$ by the standard affine coordinate $z=(z:1)$, where $\infty=(1:0)$.  In this case the set of $S$-integral points with respect to $\infty$ is precisely the set $\bar{\ZZ}$ of algebraic integers in $\bar{\QQ}$.  Define $\phi:\PP^1\to\PP^1$ by $\phi(z)=z^2/(z^2+4z+1)$.  The point $\infty$ is not preperiodic, and at the archimedean place it is attracted to the super-attracting fixed point $0$; hence $\infty$ is in the archimedean Fatou set.  Since $\phi$ has good reduction at all non-archimedean places, every point (in particular $\infty$) is Fatou at those places.  Theorem \ref{MainThm} implies that only finitely many elements of $\bar{\ZZ}$ are $\phi$-preperiodic.  It is easy to see that $\phi$ is not covered by the results in \cite{BakerIhRumely}, and thus gives a genuinely new example of the finiteness property predicted by Conjecture \ref{IhConj}.

\medskip

{\em Example 2.}  With $K=\QQ$ and $S$ as above, consider any monic polynomial $\phi(z)\in\ZZ[z]$ of degree $d\geq2$; thus $\infty$ is fixed by $\phi$.  All of the (infinitely many) preperiodic points except $\infty$ are in $\bar{\ZZ}$, since any root of $\phi^{m}(z)-\phi^n(z)=0$ for $0\leq n<m$ is an algebraic integer.  This shows that in Conjecture \ref{IhConj}, the requirement that $P$ is not preperiodic cannot be omitted. 

\medskip

{\em Example 3.}  With $K=\QQ$ and $S$ as above, let $\phi(z)=2z^2/(4z^2+7z+1)$.  The point $\infty$ is not preperiodic, and is Fatou at the archimedean place and at all odd primes.  However, $\infty$ is in the $2$-adic Julia set $\Jcal_2(\phi)$.  Thus, while Conjecture \ref{IhConj} predicts that only finitely many elements of $\bar{\ZZ}$ are $\phi$-preperiodic, our local methods are not enough to deduce this.  [To check the claims made here, it helps to know that $\phi=f^{-1}\circ\psi\circ f$, where $\psi(z)=(z^2-z)/2$ and $f(z)=(4z+1)/z$, and that $\Jcal_2(\psi)=\ZZ_2$.] 

\medskip

The primary tool in the proof of Theorem \ref{MainThm} is the equidistribution theorem for small points with respect to the canonical height $\hhat_\phi$ associated to $\phi$, which for each place $v$ of $K$ governs the limiting distribution of such points on the  Berkovich projective line $\Psf^1_v$ associated to $\PP^1(\CC_v)$.  Following the initial approach of Baker-Ih-Rumely \cite{BakerIhRumely}, we assume on the contrary that the set of preperiodic points which are $S$-integral with respect to $P$ is infinite.  The idea is to then use the equidistribution theorem at each place $v\in S$ to test these preperiodic points against a certain function $F$ on $\Psf^1_v$, which is defined as a sum of Arakelov-Green's functions, and to achieve a contradiction to the assumption that $P$ is not preperiodic.

However, this approach is complicated by the fact that the function $F$ has logarithmic singularities at the embeddings of $P$ into $\Psf_v^1$, and thus the equidistribution theorem does not apply directly.  In the special cases arising from algebraic groups treated in \cite{BakerIhRumely}, Baker-Ih-Rumely get around this problem by replacing $F$ with a truncation $F^*$ depending on other parameters, and combining: (a) a global Diophantine ingredient (linear forms in logarithms) to ensure that preperiodic points do not get too close to the point $P$ in terms of their heights and degrees; and (b) a quantitative version of the equidistribution theorem.

In this paper we use the totally Fatou hypothesis to prove the finiteness result using purely local means.  Our proof makes essential use of the following two properties of the Fatou set $\Fcal_v(\phi)$: (a) the preperiodic points are discrete in $\Fcal_v(\phi)$ (Proposition \ref{DiscThm}), and (b) the target measure in the equidistribution theorem is supported outside $\Fcal_v(\phi)$.  These two facts allow the truncation argument to proceed without the use of linear forms in logarithms, and without a quantitative equidistribution theorem.

Finally, we remark that if $v$ is a place at which the Julia set $\Jcal_v(\phi)$ is nonemtpy, then it is known (due to Fatou and Julia in the archimedean case, and Hsia in the non-archimedean case) that each point of $\Jcal_v(\phi)$ is a limit point of preperiodic points.  Therefore, Theorem \ref{MainThm} is in some sense the most general result toward Conjecture \ref{IhConj} which can be proved using purely local methods; any further progress should require the use of some global Diophantine arguments.

\medskip

{\em Acknowledgment.}  The author thanks Rob Benedetto for pointing out Rivera-Letelier's results and their application to the proof of Proposition \ref{DiscThm} in the non-archimedean case.


\section{Rational Dynamics Over Local Fields}\label{LocDynSect}

\subsection{Local analytic preliminaries}

Let $v$ be a place of the number field $K$, let $|\cdot|_v$ be an absolute value associated to $v$, and let $\CC_v$ be the completion of the algebraic closure of the completion of $K$ at $v$; thus $\CC_v$ is the smallest algebraically closed extension of $K$ which is complete with respect to $|\cdot|_v$.  Define a norm $\|\cdot\|_v$ on $\CC_v^2$ by $\|\z\|_v=(|z_0|_v^2+|z_1|_v^2)^{1/2}$ if $v$ is archimedean, and by $\|\z\|_v=\max\{|z_0|_v,|z_1|_v\}$ if $v$ is non-archimedean, where $\z=(z_0,z_1)\in\CC_v^2$.  The projective metric $\delta_v(\cdot,\cdot)$ on $\PP^1(\CC_v)$ with respect to this absolute value is defined by
\begin{equation*}
\delta_v(P,Q) = \frac{|\z\wedge\w|_v}{\|\z\|_v\|\w\|_v},
\end{equation*}
where $\z,\w\in\CC_v^2$ are lifts of $P,Q\in\PP^1(\CC_v)$ respectively, and where $\z\wedge\w=z_0w_1-z_1w_0$.  For each $P\in\PP^1(\CC_v)$ and $r>0$, we denote the open disc of radius $r$ about $P$ by $D_r(P) = \{Q\in\PP^1(\CC_v)\mid \delta_v(P,Q)<r\}$.

\subsection{Reduction}  Suppose that $v$ is a non-archimedean place of $K$.  We let $\Ocal_v$ denote the ring of integers in $\CC_v$, $\Mcal_v$ its maximal ideal, and $k_v=\Ocal_v/\Mcal_v$ the residue field.  Let $\PP_{\Ocal_v}^1$ denote the usual integral model for $\PP^1$, and let $r_v:\PP^1(\CC_v)\to\PP^1(k_v)$ be the induced reduction map.  It is straightforward to show that given two points $P,Q\in\PP^1(\CC_v)$, we have $r_v(P)=r_v(Q)$ if and only if $\delta_v(P,Q)<1$.

A rational map $\phi:\PP^1\to\PP^1$ defined over $\CC_v$ is said to have good reduction if it extends to a morphism $\phi:\PP_{\Ocal_v}^1\to\PP_{\Ocal_v}^1$ of schemes.  In particular, such a map induces a a rational map $\tilde{\phi}:\PP^1\to\PP^1$ defined over the residue field $k_v$.

\subsection{The Fatou and Julia sets}\label{FatJulSect}

Let $\phi:\PP^1\to\PP^1$ be a rational map defined over $\CC_v$, and let $P\in\PP^1(\CC_v)$ be a point.  We say that the family of iterates $\{\phi^n\}_{n=1}^{\infty}$ is equicontinuous at $P$ if for each $\epsilon>0$ there exists some $\delta>0$ such that $\phi^n(D_\delta(P))\subseteq D_\epsilon(\phi^n(P))$ for all $n\geq1$.  The Fatou set $\Fcal_v(\phi)$ associated to $\phi$ is defined as the largest open subset of $\PP^1(\CC_v)$ such that $\{\phi^n\}_{n=1}^{\infty}$ is equicontinuous at each point $P\in \Fcal_v(\phi)$.  The Julia set $\Jcal_v(\phi)$ is defined to be the complement in $\PP^1(\CC_v)$ of the Fatou set.  

By definition the Fatou set is open and the Julia set is closed.  The Fatou set (and therefore also the Julia set) is completely invariant with respect to $\phi$ in the sense that $\phi^{-1}(\Fcal_v(\phi))=\Fcal_v(\phi)=\phi(\Fcal_v(\phi))$.  Moreover, if $v$ is a non-archimedean place at which $\phi$ has good reduction, then $\Jcal_v(\phi)=\emptyset$.  For more details and for the proofs of the basic facts about these sets, see \cite{Silverman} $\S$1.4 and $\S$5.4.

\subsection{The discreteness of preperiodic points in the Fatou set}

In this section we will prove the following result on Fatou preperiodic points.

\begin{prop}\label{DiscThm}
Let $v$ be a place of $K$, and let $\phi:\PP^1\to\PP^1$ be a rational map of degree $d\geq2$ defined over $\CC_v$.  Then the set of preperiodic points in $\Fcal_v(\phi)$ is discrete in $\Fcal_v(\phi)$.
\end{prop}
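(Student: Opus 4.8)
The plan is to argue separately in the archimedean and non-archimedean cases, but in both cases the engine is the same dichotomy: near a periodic point in the Fatou set, the map $\phi$ is either attracting (so nearby points converge to the cycle and cannot be periodic themselves, except for the cycle points) or the point lies on the boundary of a rotation-type domain, which cannot happen in the Fatou set under iteration in a way that accumulates periodic points. First I would reduce to the case of fixed points by replacing $\phi$ with an iterate $\phi^n$: since $\Fcal_v(\phi^n)=\Fcal_v(\phi)$ and the preperiodic points of $\phi^n$ coincide with those of $\phi$, it suffices to show that the fixed points of $\phi^n$ that lie in the Fatou set, together with their preimages under iteration, are discrete; and by a standard argument it is enough to show that each periodic point $P\in\Fcal_v(\phi)$ is an isolated periodic point, since preperiodic points mapping into the periodic part form finitely many preimage trees over a discrete set, using that $\phi$ is finite-to-one and the preperiodic points form a closed-in-$\Fcal_v$ set.

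For the archimedean case ($\CC_v=\CC$) I would invoke the classical Fatou–Julia classification of periodic Fatou components. A periodic point in the Fatou set is either (super)attracting, or parabolic but then it lies in $\Jcal_v$—contradiction—or it is an irrationally indifferent point at the center of a Siegel disc, or it is (pre)periodic into a Herman ring. In the attracting case the multiplier has $|\lambda|<1$, so by the holomorphic linearization (or just the contraction estimate on a small disc) no other point in a neighborhood can be periodic for that period, and one controls all periods simultaneously by passing to the iterate: the key point is that in a small enough neighborhood the entire forward orbit stays near the cycle, so any periodic point there has period dividing $n$ and hence equals a cycle point. In the Siegel/Herman case the point is surrounded by invariant analytic curves on which $\phi^n$ acts as an irrational rotation; a periodic point would have to lie on such a curve with rational rotation number, which is impossible. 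Hence periodic points are isolated, giving the archimedean statement.

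For the non-archimedean case the analogous classification is due to Rivera-Letelier, as the acknowledgment signals: a periodic point in $\Fcal_v(\phi)$ is either attracting (multiplier of absolute value $<1$) or indifferent (multiplier of absolute value $1$), and in the indifferent case Rivera-Letelier's theory shows the point lies in a domain on which a suitable iterate acts as a "rotation" (and the surrounding periodic structure is again organized by analytic circles or the point is isolated as a periodic point of its exact period), while an attracting periodic point attracts a whole ball. In the attracting case the same contraction-on-a-small-ball argument used over $\CC$ works verbatim using the ultrametric: shrink the ball so the forward orbit stays near the cycle, so any periodic point in it has period dividing $n$ and equals a cycle point. In the indifferent case one uses the absence of rationally indifferent (parabolic) periodic points in the non-archimedean Fatou set together with the local normal form to conclude that a neighborhood of $P$ contains no other periodic point of any period. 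The main obstacle—and the reason this section is not entirely routine—is assembling the indifferent case cleanly: over $\CC$ one must rule out periodic points accumulating on the boundary circles of Siegel discs and Herman rings, and non-archimedeanly one must extract exactly the statement needed from Rivera-Letelier's structure theory for indifferent cycles; I expect the proof to cite these classifications and then do the short contraction estimate in the attracting case explicitly.
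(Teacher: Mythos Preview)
Your reduction step contains a genuine gap. You claim that it suffices to show each periodic point in $\Fcal_v(\phi)$ is isolated among periodic points, ``since preperiodic points mapping into the periodic part form finitely many preimage trees over a discrete set, using that $\phi$ is finite-to-one and the preperiodic points form a closed-in-$\Fcal_v$ set.'' But this does not work: the backward orbit of a single periodic point is an \emph{infinite} tree, so the set of preperiodic points is a countable increasing union $\bigcup_{n\geq0}\phi^{-n}(\text{Per})$ of discrete sets, and such a union need not be discrete. Your appeal to closedness is circular (that is essentially what is to be proved), and in the non-archimedean case there may be infinitely many periodic Fatou points, so even ``finitely many trees'' fails. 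Concretely, nothing in your outline rules out a sequence of strictly preperiodic points $Q_n$ with $\phi^{k_n}(Q_n)$ periodic, $k_n\to\infty$, accumulating at a Fatou point $L$ which is not itself preperiodic.

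The paper closes exactly this gap with an equicontinuity argument you are missing. It first proves a lemma: if $P\in\Fcal_v(\phi)$ is strictly preperiodic, then $P$ is not a limit point of any forward orbit $\{\phi^n(Q)\}$. Then, assuming a limit point $L\in\Fcal_v(\phi)$ of preperiodic points exists, equicontinuity at $L$ forces some fixed strictly preperiodic $P$ to be a limit of the forward orbit of $L$, contradicting the lemma. In the archimedean case this is run globally, using the stronger input that the Fatou set has only \emph{finitely many} periodic points (so the relevant set $E$ of ``last strictly preperiodic'' points is finite). In the non-archimedean case, where periodic Fatou points can be infinite, the paper instead invokes Rivera-Letelier's classification of Fatou \emph{components} (wandering disc, attracting basin, or eventually in the quasiperiodicity domain $\Ecal_v(\phi)$) to pin down where $L$ can sit, and then uses the same equicontinuity lemma in the attracting-basin case and the analyticity/injectivity structure of $\Ecal_v(\phi)$ in the remaining case. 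Your local multiplier analysis at periodic points is fine as far as it goes, but it addresses a different (and easier) question than the one at hand.
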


If $v$ is archimedean the proposition follows fairly easily from the finiteness of Fatou periodic points.  If $v$ is non-archimedean this finiteness property fails, which makes the proof somewhat more complicated in this case.  We first record the following lemma.

\begin{lem}\label{ArchDiscLemma}
Let $v$ be a place of $K$, and let $\phi:\PP^1\to\PP^1$ be a rational map of degree $d\geq2$ defined over $\CC_v$.  Let $P$ be a point in the Fatou set $\Fcal_v(\phi)$ which is preperiodic but not periodic.  Given any $Q\in\PP^1(\CC_v)$, $P$ is not a limit point of the sequence $\{\phi^n(Q)\}$.
\end{lem}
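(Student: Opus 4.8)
The plan is to argue by contradiction: suppose that $P$ is preperiodic but not periodic, lies in $\Fcal_v(\phi)$, and yet is a limit point of the orbit $\{\phi^n(Q)\}$ for some $Q\in\PP^1(\CC_v)$. Since $P$ is preperiodic but not periodic, there is a minimal integer $m\geq 1$ such that $\phi^m(P)$ is periodic, say with exact period $\ell\geq 1$; the key structural fact to exploit is that $P$ is \emph{not} in the periodic cycle of $\phi^m(P)$, so in particular $\phi^n(P)\neq P$ for every $n\geq 1$. Because $P\in\Fcal_v(\phi)$ and the Fatou set is completely invariant, the whole forward orbit of $P$ lies in $\Fcal_v(\phi)$; moreover the cycle of $\phi^m(P)$ is a periodic cycle inside the Fatou set, hence is non-repelling, and one knows that a non-repelling cycle in the Fatou set is (a subset of) an attracting cycle, a rationally neutral (parabolic) cycle, or lies in a rotation domain (Siegel disc or Herman ring) — in all cases there is a Fatou component $U$ containing $\phi^m(P)$ on which $\phi^\ell$ acts with the cycle point as an attracting or neutral fixed point, and $P$ is \emph{not} that fixed point.

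First I would pull the situation back to $P$ itself: let $U_0$ be the Fatou component containing $P$, and observe that $\phi^m$ maps $U_0$ into the Fatou component $U$ of the cycle point $\phi^m(P)$. The hypothesis that $P$ is a limit point of $\{\phi^n(Q)\}$ means there is a strictly increasing sequence $n_j\to\infty$ with $\phi^{n_j}(Q)\to P$ in the metric $\delta_v$. For $j$ large, $\phi^{n_j}(Q)$ lies in $U_0$, so $\phi^{n_j+m}(Q)$ lies in $U$ and converges to $\phi^m(P)$. Now iterate within $U$ by multiples of $\ell$: by equicontinuity of $\{\phi^n\}$ at $P$ (and hence, composing, at $\phi^m(P)$), the convergence $\phi^{n_j+m}(Q)\to\phi^m(P)$ propagates forward, so that for each fixed $t\geq 0$ one gets $\phi^{n_j+m+t\ell}(Q)\to\phi^{m+t\ell}(Q)$ — wait, rather one gets that the orbit of $Q$ accumulates on the entire forward orbit of $\phi^m(P)$ under $\phi^\ell$, which is just the cycle point itself. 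The real point is to run this argument \emph{backwards} along the minimal iterate: equicontinuity of $\{\phi^n\}$ at $P$ says that a small neighborhood of $P$ is carried by every $\phi^n$ into a small neighborhood of $\phi^n(P)$; I would use this together with $\phi^{n_j}(Q)\to P$ to conclude $\phi^{n_j + n}(Q) \to \phi^n(P)$ for every fixed $n\geq 0$, uniformly enough that the accumulation set of $\{\phi^n(Q)\}$ is forward-invariant and contains $P$.

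The contradiction then comes from the dynamics near the cycle. Since the accumulation set $A$ of $\{\phi^n(Q)\}$ contains $P$ and is forward $\phi$-invariant, it contains $\phi^m(P)$ and hence (being closed and invariant) the whole attracting-or-neutral fixed point analysis forces $A$ to behave compatibly with the local model at $\phi^m(P)$: in the attracting case every nearby point converges to the fixed point so $A$ near $\phi^m(P)$ is just $\{\phi^m(P)\}$, contradicting that $A$ also contains the distinct points $\phi^{m-1}(P),\dots,P$ and $\phi^m(P)$ being a genuine limit along $n_j+m$; in the neutral/rotation case the linearizing coordinate turns $\phi^\ell$ near $\phi^m(P)$ into an irrational rotation or a parabolic map, and one checks that a forward orbit cannot accumulate at a point $P'$ in the component whose image $\phi^m(P')$ differs from $P'$ in a way incompatible with the rotation/parabolic normal form — concretely, if $\phi^{n_j}(Q)\to P$ and also $\phi^{n_j+m}(Q)\to\phi^m(P)$, then applying $\phi^m$ to the first limit and using continuity gives $\phi^{n_j+m}(Q)\to\phi^m(P)$ automatically, so no contradiction yet; the contradiction instead is extracted by noting that in a Siegel disc or Herman ring every orbit is recurrent and its closure is a union of $\phi^\ell$-invariant circles, and $P$ lying on the boundary-accumulation forces $P$ to be periodic after all, while in the parabolic case the orbit of $Q$ in $U$ converges to the parabolic point along a single attracting petal, again forcing the accumulation point $\phi^m(P)$ to be the parabolic fixed point and then pulling back shows $P$ itself is eventually the fixed point, i.e. periodic. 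The main obstacle, and the step requiring the most care, is handling the rotation-domain case uniformly in both the archimedean and non-archimedean settings; in the non-archimedean case the relevant classification of non-repelling cycles and of Fatou components (due to Rivera-Letelier, as the acknowledgment signals) replaces the Fatou–Julia theory, and I would invoke exactly that structure — that a preperiodic non-periodic Fatou point cannot be recurrent for the orbit of any point — to close the argument.
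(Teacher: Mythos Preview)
Your proposal tries to prove the lemma via the classification of periodic Fatou components (attracting, parabolic, Siegel, Herman; Rivera-Letelier's analogue in the non-archimedean case) and a case-by-case analysis of what the $\omega$-limit set of $Q$ can look like. Parts of this can be made to work, but the argument as written has real gaps: in the rotation-domain case you only gesture at why an orbit accumulating at $P$ forces $P$ to be periodic, and in the parabolic case the sentence ``pulling back shows $P$ itself is eventually the fixed point, i.e.\ periodic'' is not a deduction but a restatement of what you want. More seriously, you are invoking deep structure theorems (classification of periodic components, no-wandering-domains in the archimedean case, Rivera-Letelier's classification non-archimedeanly) to prove a statement whose hypotheses already hand you everything you need; the machinery is doing no work that equicontinuity alone does not already do.

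The paper's proof is a two-line argument that you are missing. Since $P$ is preperiodic but not periodic, its forward orbit $O^+(P)=\{\phi^n(P):n\geq1\}$ is a \emph{finite} set that does \emph{not} contain $P$. Choose $\epsilon>0$ so small that $D_\epsilon(P)$ is disjoint from $D_\epsilon(P')$ for every $P'\in O^+(P)$. By equicontinuity at $P$ there is $\delta\in(0,\epsilon)$ with $\phi^n(D_\delta(P))\subseteq D_\epsilon(\phi^n(P))$ for all $n\geq1$; hence $D_\delta(P)\cap\phi^n(D_\delta(P))=\emptyset$ for all $n\geq1$. It follows that any orbit $\{\phi^n(Q)\}$ can visit $D_\delta(P)$ at most once, so $P$ is never a limit point. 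This works verbatim at every place, archimedean or not, and uses nothing beyond the definition of the Fatou set.
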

\begin{proof}
We will first show that there exists some $\delta>0$ such that the disc $D_\delta(P)$ does not meet any of its images $\phi^n(D_\delta(P))$ under iterates of $\phi$, where $n\geq1$.  To see this, choose $\epsilon>0$ so small that for each point $P'$ in the forward orbit $O^+(P)=\{\phi^n(P) \mid n\geq1\}$ of $P$, $D_\epsilon(P)$ does not meet $D_\epsilon(P')$; this is possible since by assumption $O^+(P)$ is finite and does not contain $P$.  By the equicontinuity of $\{\phi^n\}$ there exists $\delta>0$ such that $\phi^n(D_\delta(P))\subseteq D_\epsilon(\phi^n(P))$ for all $n\geq1$.  Decreasing $\delta$ if necessary we may assume $\delta<\epsilon$, and none of the discs $D_\epsilon(\phi^n(P))$ meet $D_\epsilon(P)\supseteq D_\delta(P)$; thus $D_\delta(P)\cap\phi^n(D_\delta(P))=\emptyset$ for all $n\geq1$, completing the proof of the claim.

Now, it follows from the existence of such $\delta>0$ that given any $Q\in\PP^1(\CC_v)$, at most one term in the sequence $\{\phi^n(Q)\}$ can fall into the set $D_\delta(P)$.
\end{proof}

\begin{proof}[Proof of Proposition \ref{DiscThm} when $v$ is archimedean.]
Let $E$ be the set of points $P$ in $\Fcal_v(\phi)$ such that $P$ is not periodic but $\phi(P)$ is periodic.  Plainly $E$ is finite since $\Fcal_v(\phi)$ contains only finitely many periodic points (\cite{Silverman} Thm. 1.35 (a)), and each point has at most $d$ pre-images.  By the complete invariance of the Fatou set, the set of preperiodic points in $\Fcal_v(\phi)$ is precisely the union of the finite set $E$, its backward orbit 
\begin{equation*}
O^-(E)=\{Q\in\PP^1(\CC_v)\mid \phi^n(Q)\in E\text{ for some }n\geq1\},
\end{equation*}
and the finite set of periodic points in $\Fcal_v(\phi)$, In particular, in order to prove the proposition it suffices to show that $O^-(E)$ is discrete in $\Fcal_v(\phi)$.

Suppose on the contrary that $O^-(E)$ has a limit point $L$ in $\Fcal_v(\phi)$.  Then there exists a sequence of distinct points $\{Q_n\}$ in $O^-(E)$ converging to $L$.  Since $E$ is finite, passing to a subsequence we may assume that all points of the sequence $\{Q_n\}$ are elements of $O^-(P)$ for a single point $P\in E$.  It follows that $P$ is a limit point of the sequence $\{\phi^n(L)\}$.  [Proof: let $\epsilon>0$ be arbitrary.  By the equicontinuity property there exists $\delta$ such that $\phi^m(D_\delta(L))\subseteq D_\epsilon(\phi^m(L))$ for all $m\geq1$.  Select an element $Q_n$ in our sequence $\{Q_n\}$ which is in the disc $D_\delta(L)$; then  $P\in D_\epsilon(\phi^m(L))$ for the $m\geq1$ satisfying $\phi^m(Q_n)=P$.  Thus each disc $D_\epsilon(P)$ around $P$ contains some iterate $\phi^m(L)$ of $L$.]  The fact that $P$ is a limit point of the sequence $\{\phi^n(L)\}$ contradicts Lemma \ref{ArchDiscLemma}, and thus completes the proof that $O^-(E)$ is discrete in $\Fcal_v(\phi)$.
\end{proof}

When $v$ is non-archimedean, the Fatou set $\Fcal_v(\phi)$ may contain infinitely many periodic points, and therefore a direct translation of the archimedean proof of Proposition \ref{DiscThm} is not possible.  It turns out, however, that the discreteness of preperiodic points follows almost immediately from results in J. Rivera-Letelier's thesis, published as \cite{RiveraLetelier}.  Actually a related result which treats only periodic points is stated as Cor. 4.28 in \cite{RiveraLetelier}; the extra arguments needed to treat preperiodic points are relatively straightforward.  

We will now briefly summarize the needed results of Rivera-Letelier, and we will then complete the proof of Proposition \ref{DiscThm}.  Let $v$ be a non-archimedean place of $K$.  Rivera-Letelier defines a certain subset $\Ecal_v(\phi)$ of $\Fcal_v(\phi)$ called the quasiperiodicity domain, which can be characterized as the largest open subset of $\PP^1(\CC_v)$ such that each point $P\in\Ecal_v(\phi)$ is a limit point of its forward orbit $\{\phi^n(P)\}_{n=1}^{\infty}$.  One of the main results of \cite{RiveraLetelier} (see Th\'eor\`em de Classification, $\S$4.4) states that each point $P$ in the Fatou set $\Fcal_v(\phi)$ has one of the following three properties:
\begin{itemize}
	\item $P$ is an element of a wandering disc $D=D_r(P)$ for some $r>0$; this means that $\phi^m(D)\cap\phi^n(D)=\emptyset$ for all $0\leq m<n$. 
	\item $P$ is an element of the basin of attraction $B(Q)=\{R\in\PP^1(\CC_v) \mid \phi^{kn}(R)\to Q \text{ as }n\to+\infty\}$ of an attracting periodic point $Q$ of period $k\geq1$.  
	\item Some iterate $\phi^n(P)$ of $P$ is an element of $\Ecal_v(\phi)$.
\end{itemize}

\begin{proof}[Proof of Proposition \ref{DiscThm} when $v$ is non-archimedean]

Suppose on the contrary that $L\in\Fcal_v(\phi)$ is a limit point of preperiodic points.  Plainly $L$ cannot be an element of a wandering disc $D$, since such discs contain no preperiodic points.

Suppose that $L$ is an element of the basin of attraction $B(Q)$ of an attracting periodic point $Q$.  By replacing $\phi$ with an appropriate iterate $\phi^k$ we may assume without loss of generality that $Q$ is fixed by $\phi$; thus $Q$ is the only periodic point in $B(Q)$.  Let $E=\phi^{-1}(Q)\setminus\{Q\}$.  If $E$ is empty then $Q$ is the only preperiodic point in $B(Q)$, which contradicts the assumption that $L\in B(Q)$ is a limit point of preperiodic points.  Plainly every preperiodic point in $B(Q)$ is an element of $\{Q\}\cup E\cup O^-(E)$, and therefore since $E$ is finite we may assume without loss of generality that $L$ is a limit point of $O^-(P)$ for some $P\in E$.  It follows from the same argument (using equicontinuity) we used in the archimedean case that $P$ is a limit point of the sequence $\phi^n(L)$; this contradicts Lemma \ref{ArchDiscLemma}, since $P$ is preperiodic but not periodic.  Thus, $L$ cannot be an element of an attracting basin.

Finally, by Rivera-Letelier's classification theorem, the only remaining possibility is that some iterate $\phi^n(L)$ is an element of the quasiperiodicity domain $\Ecal_v(\phi)$.  Replacing $L$ with $\phi^n(L)$ we may assume without loss of generality that $L\in\Ecal_v(\phi)$.  But the set of preperiodic points in $\Ecal_v(\phi)$ is discrete: Rivera-Letelier shows (\cite{RiveraLetelier} Prop. 3.16) that there exists an analytic function on $\Ecal_v(\phi)$ which vanishes precisely on the periodic points of $\Ecal_v(\phi)$, and since $\phi$ is injective on $\Ecal_v(\phi)$ (\cite{RiveraLetelier} Prop. 3.9 (ii)), $\Ecal_v(\phi)$ contains no preperiodic points which are not periodic.  We conclude that the set of preperiodic points can have no limit point in $\Fcal_v(\phi)$. 
\end{proof}

\subsection{The Berkovich projective line and the canonical measure}\label{BerkLine}

For each place $v$ of $K$, the Berkovich projective line $\Psf_v^1$ is a Hausdorff, path-connected, and compact topological space which contains the ordinary projective line $\PP^1(\CC_v)$ as a dense subspace.  In the archimedean case $\Psf_v^1=\PP^1(\CC_v)$, but when $v$ is nonarchimedean $\Psf_v^1$ contains many points in addition to the ones in $\PP^1(\CC_v)$.  The $v$-adic metric $\delta_v(\cdot,\cdot)$ extends naturally to $\Psf_v^1$, although in the non-archimedean case it does not define a metric on the larger space, since for example $\delta_v(P,P)>0$ for $P\in \Psf_v^1\setminus \PP^1(\CC_v)$.  For the definition and basic properties of the Berkovich projective line see \cite{BakerRumelyBook} or \cite{Berkovich}.

Any rational map $\phi:\PP^1\to\PP^1$ defined over $\CC_v$ extends naturally to a map $\phi:\Psf_v^1\to\Psf_v^1$.  Assuming $\phi$ has degree $d\geq2$, the canonical measure $\mu_{\phi,v}$ associated to $\phi$ is a certain positive unit Borel measure on $\Psf_v^1$ which satisfies the identities $\phi_*(\mu_{\phi,v}) = \mu_{\phi,v}$ and $\phi^*(\mu_{\phi,v}) = d\cdot\mu_{\phi,v}$.  There are several equivalent definitions of this measure; for example in \cite{BakerRumelyBook} it is defined as (essentially) the negative Laplacian of a Call-Silverman local canonical height function associated to $\phi$.  In our proof of Theorem \ref{MainThm}  we will make essential use of the fact that $\mu_{\phi,v}$ occurs as the limiting measure in the equidistribution theorem for dynamically small points in $\PP^1(\Kbar)$; see $\S$\ref{EquidistSect} for the statement of this theorem.  Finally, we will also need to use the fact that the support of the canonical measure in $\Psf^1_v$ does not meet the Fatou set; that is
\begin{equation}\label{SuppFatou}
\supp(\mu_{\phi,v})\cap\Fcal_v(\phi)=\emptyset.
\end{equation}
For more details on the canonical measure see \cite{FreireLopesMane} for the archimedean case and \cite{BakerRumelyBook}, Ch. 8 for the non-archimedean case.


\section{Heights, Equidistribution, and the Proof of Theorem \ref{MainThm}}

\subsection{Normalized absolute values and global heights}

Let $K$ be a number field, let $M_K$ denote the set of places of $K$, and for each $v\in M_K$ select an absolute value $|\cdot|_v$ associated to $v$ such that the product formula holds in the form $\prod_{v\in M_K}|\alpha|_v=1$ for all $\alpha\in K^\times$.  Then more generally, given a finite extension $L/K$, we have $\prod_{v\in M_K}\prod_{\sigma}|\sigma(\alpha)|_v=1$ for all $\alpha\in L$, where the inner product is taken over all $[L:K]$ $K$-embeddings $\sigma:L\hookrightarrow\CC_v$.  The absolute Weil height function $h:\PP^1(\Kbar)\to\RR$ is defined by 
\begin{equation*}
h(P) = \frac{1}{[L:K]}\sum_{v\in M_K}\sum_{\sigma:L\hookrightarrow\CC_v}\max\{|\sigma(z_0)|_v,|\sigma(z_1)|_v\},
\end{equation*}
where $L/K$ is a finite extension such that $P\in \PP^1(L)$, and $\z=(z_0,z_1)\in L^2$ is a lift of $P$; as is well-known, this definition does not depend on the choice of $L$ or $\z$.

Let $\phi:\PP^1\to\PP^1$ be a rational map of degree $d\geq2$ defined over $K$.  The Call-Silverman canonical height function $\hhat_\phi:\PP^1(\Kbar)\to\RR$ associated to $\phi$ is defined by $\hhat_\phi(P)=\lim_{n\to\infty}\frac{1}{d^n}h(\phi^n(P))$.  A basic property of the canonical height is that $\hhat_\phi(P)\geq0$ for all $P\in\PP^1(\Kbar)$, with equality if and only if $P$ is preperiodic; see \cite{Silverman} Thm. 3.22.

\subsection{Dynamical Arakelov-Green's functions}

Like many global height functions, the canonical height $\hhat_\phi$ can be decomposed into a sum of local terms.  A useful object for this purpose is the family of dynamical Arakelov-Green's functions
\begin{equation*}
g_{\phi,v} : \Psf_v^1\times \Psf_v^1\setminus\diag\{\PP^1(\CC_v)\}\to\RR,
\end{equation*}
defined for each place $v$ of $K$; here $\diag\{\PP^1(\CC_v)\} = \{(P,P)\in\Psf_v^1\times \Psf_v^1 \mid P\in \PP^1(\CC_v)\}$.  The function $g_{\phi,v}$ is symmetric, continuous, and has a logarithmic singularity along $\diag\{\PP^1(\CC_v)\}$ in the sense that $(P,Q)\mapsto g_{\phi,v}(P,Q)+\log\delta_v(P,Q)$ extends to a continuous function on $\Psf_v^1\times\Psf_v^1$.  As a function of one variable $P\in\PP^1(\CC_v)$, $g_{\phi,v}$ can be viewed as a continuously varying family of Call-Silverman canonical local height functions in the sense of \cite{CallSilverman}.  For the definition and the proofs of these basic properties of $g_{\phi,v}$ see \cite{BakerRumely} $\S$3.4.  

In addition to its relationship with the canonical height, the function $g_{\phi,v}$ has the following two properties which will be useful to us: first, as a function of each variable, $g_{\phi,v}$ is orthogonal to the equilibrium measure $\mu_{\phi,v}$; and second, if $v$ is a place at which $\phi$ has good reduction, and if $P,Q\in\PP^1(\CC_v)$, then $g_{\phi,v}(P,Q)$ is closely related to the intersection multiplicity of $P$ and $Q$ in $\PP^1_{\Ocal_v}$.  In the following proposition we will give more precise statements of the aforementioned properties of the functions $\hhat_\phi$ and $g_{\phi,v}$.

\begin{prop}\label{HeightProp}
Let $\phi:\PP^1\to\PP^1$ be a rational map of degree $d\geq2$ defined over a number field $K$.  
\begin{quote}
{\bf (a)}  $\int g_{\phi,v}(P,Q)d\mu_{\phi,v}(P)=0$ for each place $v$ of $K$ and each $Q\in\PP^1(\CC_v)$. \\
{\bf (b)}  Suppose that $v$ is non-archimedean place at which $\phi$ has good reduction, and let $P,Q\in\PP^1(\Kbar)$ be distinct points.  Then $g_{\phi,v}(P,Q)=-\log \delta_v(P,Q)$.  In particular, $g_{\phi,v}(P,Q)\geq0$, with equality if and only if $P$ and $Q$ do not meet in $\PP^1_{\Ocal_v}$.  \\
{\bf (c)}  Let $L/K$ be a finite extension, and let $P,Q\in\PP^1(L)$ be distinct points.  Then  
\begin{equation}\label{LocalGlobalHeight}
\hhat_\phi(P) + \hhat_\phi(Q) = \frac{1}{[L:K]}\sum_{v\in M_K}\sum_{\sigma:L\hookrightarrow\CC_v}g_{\phi,v}(\sigma(P),\sigma(Q)).
\end{equation}
\end{quote}
\end{prop}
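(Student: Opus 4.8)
The plan is to handle the three parts separately: (a) and (b) are purely local, following from the description of $g_{\phi,v}$ as the Arakelov--Green's function of $\mu_{\phi,v}$, while (c) is global, obtained by rewriting $g_{\phi,v}$ via a homogeneous local canonical height and summing over all places with the product formula. For (a), I would recall (\cite{BakerRumely} $\S$3.4) that $g_{\phi,v}$ is the Arakelov--Green's function of $\mu_{\phi,v}$: writing $k_v(P,Q)=-\log\delta_v(P,Q)$ for the kernel, $u(P)=\int k_v(P,Q)\,d\mu_{\phi,v}(Q)$ for the potential, and $I=\iint k_v\,d\mu_{\phi,v}\,d\mu_{\phi,v}$ for the energy, one has $g_{\phi,v}(P,Q)=k_v(P,Q)-u(P)-u(Q)+I$. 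Since $\mu_{\phi,v}$ has continuous, hence bounded, potential, $u$ is $\mu_{\phi,v}$-integrable and Fubini applies; integrating the identity against $d\mu_{\phi,v}(P)$ gives $u(Q)-I-u(Q)+I=0$, which is (a). (Equivalently, (a) is just the normalization singling out $g_{\phi,v}$ among functions whose Laplacian in the first variable is $\delta_Q-\mu_{\phi,v}$.)

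For (b), when $\phi$ has good reduction at the non-archimedean place $v$ the canonical measure $\mu_{\phi,v}$ is the Dirac mass $\delta_\zeta$ at the Gauss point $\zeta\in\Psf_v^1$ (\cite{BakerRumelyBook}, Ch.~10). One checks $\delta_v(P,\zeta)=1$ for every classical $P\in\PP^1(\CC_v)$ and $\delta_v(\zeta,\zeta)=1$, so the potential and energy of $\delta_\zeta$ both vanish, and the formula of part (a) collapses to $g_{\phi,v}(P,Q)=-\log\delta_v(P,Q)$. The ultrametric inequality $|\z\wedge\w|_v\leq\|\z\|_v\|\w\|_v$ gives $\delta_v(P,Q)\leq1$, whence $g_{\phi,v}(P,Q)\geq0$, with equality exactly when $\delta_v(P,Q)=1$; by the discussion in $\S$\ref{LocDynSect} this means $r_v(P)\neq r_v(Q)$, i.e.\ that the Zariski closures of $P$ and $Q$ are disjoint in $\PP_{\Ocal_v}^1$.

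For (c), I would fix a homogeneous lift $F=(F_0,F_1)$ of $\phi$ with $F_0,F_1\in K[X,Y]$ of degree $d$ (so $\Res(F)\in K^\times$, as $\phi$ is a morphism), and introduce for each place $v$ the homogeneous local canonical height $\hhat_{F,v}\colon\CC_v^2\setminus\{\0\}\to\RR$, $\hhat_{F,v}(\z)=\lim_{n\to\infty}d^{-n}\log\|F^n(\z)\|_v$; the limit exists by the standard telescoping bound on $\log\|F(\z)\|_v-d\log\|\z\|_v$, and $\hhat_{F,v}(c\z)=\hhat_{F,v}(\z)+\log|c|_v$. By \cite{BakerRumely} $\S$3.4 (cf.\ \cite{CallSilverman}) there is a constant $\gamma_{F,v}$ — a fixed rational multiple, depending only on $d$, of $\log|\Res(F)|_v$ — with
\begin{equation*}
g_{\phi,v}(P,Q)=\hhat_{F,v}(\z)+\hhat_{F,v}(\w)-\log|\z\wedge\w|_v+\gamma_{F,v}
\end{equation*}
for all lifts $\z,\w$ of $P,Q$; in particular $\sum_{v\in M_K}\gamma_{F,v}=0$ by the product formula for $\Res(F)$. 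I would then take lifts $\z,\w\in L^2$ and sum this identity over all $v\in M_K$ and embeddings $\sigma\colon L\hookrightarrow\CC_v$: the terms $-\log|\sigma(\z\wedge\w)|_v$ cancel by the product formula (note $\z\wedge\w\in L^\times$ since $P\neq Q$), the constant terms contribute $[L:K]\sum_v\gamma_{F,v}=0$, and since $F^n(\sigma\z)$ is an $L$-rational lift of $\sigma(\phi^n(P))$ one has $h(\phi^n(P))=\tfrac{1}{[L:K]}\sum_{v,\sigma}\log\|F^n(\sigma\z)\|_v$ up to an error bounded independently of $n$ (comparing $\ell^2$- and sup-norms at the archimedean places); dividing by $d^n$ and letting $n\to\infty$ gives $\sum_{v,\sigma}\hhat_{F,v}(\sigma\z)=[L:K]\hhat_\phi(P)$, and similarly $[L:K]\hhat_\phi(Q)$ for $\w$. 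Assembling these identities produces \eqref{LocalGlobalHeight}.

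The potential-theoretic inputs for (a)--(b) and the product-formula bookkeeping in (c) are routine; the step I expect to require the most care is pinning down the normalization constant $\gamma_{F,v}$ in (c) — one must verify it depends on $F$ (and $v$) only through $\log|\Res(F)|_v$, with no additional $F$-independent term at each place, since this is precisely what forces $\sum_v\gamma_{F,v}=0$ and lets the telescoping close. Should tracking $\gamma_{F,v}$ explicitly prove awkward, (c) can instead be deduced from the Call--Silverman formalism: $g_{\phi,v}(\,\cdot\,,Q)$ is a canonical local height for the divisor $(Q)$, the global canonical height is the sum over $v$ of these local heights, and the symmetry of $g_{\phi,v}$ in its two variables yields the symmetric right-hand side $\hhat_\phi(P)+\hhat_\phi(Q)$.
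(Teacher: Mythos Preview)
Your proposal is correct, but it is worth noting that the paper's own ``proof'' of this proposition consists entirely of three citations to Baker--Rumely \cite{BakerRumely}: part (a) is Cor.~4.13, part (b) is Example 3.43, and part (c) is Lemma 4.1. No argument is given in the paper itself.

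What you have written is essentially a self-contained sketch of the arguments behind those cited results, and the reasoning is sound. For (a), your Fubini computation with the potential-kernel decomposition is exactly the standard proof of the orthogonality property. For (b), the identification $\mu_{\phi,v}=\delta_\zeta$ at the Gauss point under good reduction, together with $\delta_v(P,\zeta)=1$ for classical $P$, is the right mechanism. For (c), the homogeneous local height $\hhat_{F,v}$ and the product-formula cancellation is precisely how Baker--Rumely prove their Lemma 4.1; your caution about the normalization constant $\gamma_{F,v}$ is well-placed, and your fallback via the Call--Silverman local-height formalism is also valid. In short: you have supplied what the paper outsources, and done so correctly.
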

\begin{proof}
{\bf (a)}  \cite{BakerRumely} Cor. 4.13.  {\bf (b)}  \cite{BakerRumely} Example 3.43.  {\bf (c)}  \cite{BakerRumely} Lemma 4.1.
\end{proof}

{\em Remark.}  There is no issue of convergence in the right-hand-side of $(\ref{LocalGlobalHeight})$, as all but finitely many summands are zero.

\subsection{The equidistribution theorem and the proof of Theorem \ref{MainThm}}\label{EquidistSect}

The following theorem states that, at a given place $v$ of $K$, small global points tend to equidistribute with respect to the canonical measure $\mu_{\phi,v}$ on the Berkovich projective line $\Psf^1_v$.  When $v$ is archimedean this generalizes results of Bilu \cite{Bilu} and Szpiro-Ullmo-Zhang \cite{SzpiroUllmoZhang}; at the level of generality presented here the result is due independently to several authors; see below.

Given a finite set $\Zcal$ of points in $\PP^1(\Kbar)$, define its height to be the average $\hhat_\phi(\Zcal)=(1/|\Zcal|)\sum_{Z\in\Zcal}\hhat_\phi(Z)$ of the height of its points.

\begin{thm}[Baker-Rumely \cite{BakerRumely}, Chambert-Loir \cite{ChambertLoir}, Favre-Rivera-Letelier \cite{FavreRiveraLetelier}]\label{EquidistThm}
Let $K$ be a number field, and let $\phi:\PP^1\to\PP^1$ be a rational map of degree at least two defined over $K$.  Let $v$ be a place of $K$, and let $\epsilon:\Kbar\hookrightarrow\CC_v$ be a $K$-embedding.  Let $\{\Zcal_n\}_{n=1}^{\infty}$ be a sequence of finite $\Gal(\Kbar/K)$-stable subsets of $\PP^1(\Kbar)$ with $\hhat_\phi(\Zcal_n)\to0$ and $|\Zcal_n|\to+\infty$.  Then
\begin{equation*}
\lim_{n\to+\infty}\frac{1}{|\Zcal_n|}\sum_{Z\in \Zcal_n}F(\epsilon(P))=\int F d\mu_{\phi,v}
\end{equation*}
for all continuous functions $F:\Psf^1_v\to\RR$.
\end{thm}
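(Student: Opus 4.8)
The plan is to restate Theorem \ref{EquidistThm} as the weak-$*$ convergence of probability measures on the compact Berkovich line and to prove that convergence by a potential-theoretic energy argument driven by the hypothesis $\hhat_\phi(\Zcal_n)\to 0$. For each place $w$ of $K$ fix a $K$-embedding $\Kbar\hookrightarrow\CC_w$ and let $\nu_n^{(w)}$ denote the discrete probability measure $|\Zcal_n|^{-1}\sum_{Z\in\Zcal_n}\delta_{Z}$ pushed into $\Psf^1_w$; because $\Zcal_n$ is $\Gal(\Kbar/K)$-stable, any two $K$-embeddings differ by an automorphism permuting $\Zcal_n$, so this measure does not depend on the chosen embedding, and at the place $v$ it agrees with $|\Zcal_n|^{-1}\sum_Z\delta_{\epsilon(Z)}$. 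Since $\Psf^1_v$ is compact, the space of Borel probability measures on it is weak-$*$ sequentially compact, so it suffices to show that every weak-$*$ subsequential limit $\rho_v$ of $\{\nu_n^{(v)}\}$ equals $\mu_{\phi,v}$; testing against continuous $F$ is then exactly the assertion of the theorem.

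The engine is the energy functional $I_w(\rho)=\iint g_{\phi,w}(x,y)\,d\rho(x)\,d\rho(y)\in[-C_w,+\infty]$ attached to each place $w$, where $C_w=\sup|g_{\phi,w}+\log\delta_w|$ is finite (the logarithmic-singularity property of $g_{\phi,w}$) and, by Proposition \ref{HeightProp}(b), vanishes at every non-archimedean place of good reduction; thus $C_w\neq 0$ for only finitely many $w$, forming together with $v$ a finite set $S_0$. I will use two standard facts from Berkovich potential theory. (i) \emph{Energy minimization}: $I_w(\rho)\geq 0$ for every probability measure $\rho$, with equality if and only if $\rho=\mu_{\phi,w}$; this follows from Proposition \ref{HeightProp}(a), which gives $\int g_{\phi,w}(x,y)\,d\mu_{\phi,w}(x)=0$ and hence $I_w(\rho)=I_w(\rho-\mu_{\phi,w})$ for every $\rho$, combined with the positive-definiteness of the Arakelov--Green energy pairing on signed measures of total mass zero. (ii) \emph{Lower semicontinuity of the off-diagonal energy}: writing $I_w^{\circ}(\nu_n^{(w)})=|\Zcal_n|^{-2}\sum_{Z\neq W}g_{\phi,w}(Z,W)$ for the sum over distinct pairs, one approximates $g_{\phi,w}$ from below by the continuous truncations $\min(g_{\phi,w},M)$, notes that the $|\Zcal_n|$ discarded diagonal terms contribute only $O(M/|\Zcal_n|)$, and lets $M\to\infty$ to conclude $\liminf_n I_w^{\circ}(\nu_n^{(w)})\geq I_w(\rho_w)$ whenever $\nu_n^{(w)}\to\rho_w$ weakly. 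At good-reduction places $g_{\phi,w}\geq 0$, so there $I_w^{\circ}(\nu_n^{(w)})\geq 0$ unconditionally, while everywhere $I_w^{\circ}(\nu_n^{(w)})\geq -C_w$.

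The hypotheses enter through a single global identity: summing the local-global height formula $(\ref{LocalGlobalHeight})$ over all ordered pairs of distinct points of $\Zcal_n$ and using the embedding-independence noted above to collapse the inner sum over embeddings, one obtains
\begin{equation*}
\sum_{w\in M_K} I_w^{\circ}(\nu_n^{(w)}) \;=\; \frac{2(|\Zcal_n|-1)}{|\Zcal_n|}\,\hhat_\phi(\Zcal_n),
\end{equation*}
whose right-hand side tends to $0$ since $\hhat_\phi(\Zcal_n)\to 0$. Now fix an arbitrary subsequence and pass to a further subsequence along which $\nu_n^{(w)}$ converges weakly to some $\rho_w$ for each $w$ in the finite set $S_0$. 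Dropping the nonnegative terms at the good-reduction places outside $S_0$ shows $\liminf_n\sum_{w\in S_0}I_w^{\circ}(\nu_n^{(w)})\leq 0$; by superadditivity of $\liminf$ over this finite sum together with fact (ii), $\sum_{w\in S_0}I_w(\rho_w)\leq 0$; and by fact (i) every summand $I_w(\rho_w)$ is $\geq 0$, so each vanishes. In particular $I_v(\rho_v)=0$, whence $\rho_v=\mu_{\phi,v}$ by the equality clause of (i). Since the subsequence was arbitrary, $\nu_n^{(v)}\to\mu_{\phi,v}$ weakly.

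I expect the main obstacle to be fact (i), the energy-minimization principle with its equality characterization: it rests on the positive-definiteness of the Dirichlet/energy pairing on degree-zero measures on the Berkovich line, which requires the Laplacian formalism on $\Psf^1_w$ (in particular the identity $\Delta_x g_{\phi,w}(x,y)=\delta_y-\mu_{\phi,w}$) and a regularization to accommodate the logarithmic singularity along the diagonal; establishing it in the non-archimedean case is essentially the technical heart of \cite{BakerRumely} and \cite{FavreRiveraLetelier}. A secondary, more routine point is checking that the embedding-independence and the collapse of the sum over embeddings in the global identity go through when $\Zcal_n$ is a union of several Galois orbits rather than a single orbit, and controlling the passage between $I_w$ and its off-diagonal version $I_w^{\circ}$.
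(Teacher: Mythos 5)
The paper does not prove Theorem \ref{EquidistThm} at all; it is imported verbatim from the literature, with attributions to Baker--Rumely \cite{BakerRumely}, Chambert-Loir \cite{ChambertLoir}, and Favre--Rivera-Letelier \cite{FavreRiveraLetelier}, and then used as a black box in the proof of Theorem \ref{MainThm}. So there is no proof in the paper to compare your argument against; you have, in effect, reconstructed the content of the cited references rather than of the paper. That said, what you wrote is a correct and reasonably faithful sketch of the standard adelic energy argument those references employ: packaging the local--global height identity (\ref{LocalGlobalHeight}) as a sum of off-diagonal Arakelov--Green energies, the global identity $\sum_w I_w^{\circ}(\nu_n^{(w)}) = \tfrac{2(|\Zcal_n|-1)}{|\Zcal_n|}\hhat_\phi(\Zcal_n)$, lower semicontinuity of the energy via monotone truncation, the vanishing of $C_w$ at good-reduction places (Proposition \ref{HeightProp}(b)) to reduce to a finite set of places, and the positive-definiteness of the energy pairing on mass-zero measures to force each local limit measure to be $\mu_{\phi,w}$. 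You are also right to single out that last step as the technical crux; it is essentially the content of the nonarchimedean potential theory developed in \cite{BakerRumelyBook} and, quantitatively, in \cite{FavreRiveraLetelier}. Two minor points to tighten if you were to flesh this out: (1) for nonarchimedean $v$ the Berkovich space $\Psf^1_v$ is compact Hausdorff but not metrizable, so ``weak-$*$ sequential compactness'' of the probability measures should either be replaced by a net/subnet argument, or one should note that proving convergence for a single fixed continuous $F$ only needs the Bolzano--Weierstrass property of the real sequence $|\Zcal_n|^{-1}\sum F(\epsilon(Z))$; and (2) the continuity of $\min(g_{\phi,w},M)$ on all of $\Psf^1_w\times\Psf^1_w$ is what lets you test it against the weakly converging measures, and rests precisely on $g_{\phi,w}$ being continuous off the type~I diagonal and blowing up to $+\infty$ along it --- worth stating explicitly rather than leaving implicit in ``approximates $g$ from below by continuous truncations.''
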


Finally, we are ready to prove our main result.

\begin{proof}[Proof of Theorem \ref{MainThm}]

Enlarging the set $S$ only enlarges the set of $S$-integral points with respect to $P$, so we may assume without loss of generality that $S$ contains all places over which $\phi$ has bad reduction.  Suppose contrary to the statement of the theorem that the set of preperiodic $S$-integral points with respect to $P$ is infinite; then there exists a sequence $\{\Zcal_n\}$ of $\Gal(\Kbar/K)$-stable sets of such points with $|\Zcal_n|\to+\infty$.  Let $\Pcal$ denote the set of $\Gal(\Kbar/K)$-conjugates of $P$, and for each place $v$ of $K$, fix once and for all an embedding $\epsilon_v:\Kbar\hookrightarrow\CC_v$.

For each place $v$ of $K$ and each $n\geq1$ define
\begin{equation*}
\Gamma_v(n) = \frac{1}{|\Zcal_n||\Pcal|}\sum_{Z\in\Zcal_n}\sum_{Q\in \Pcal}g_{\phi,v}(\epsilon_v(Z),\epsilon_v(Q)),
\end{equation*}
and let $\Gamma(n)=\sum_{v\in M_K}\Gamma_v(n)$.  We are going to show on the one hand that 
\begin{equation}\label{Contradiction1}
\Gamma(n) = \hhat_\phi(P) \text{ for all } n\geq1,
\end{equation}
and on the other hand that
\begin{equation}\label{Contradiction2}
\lim_{n\to+\infty}\Gamma(n) = 0.
\end{equation}
Since $P$ is not preperiodic, $\hhat_\phi(P)>0$, and therefore $(\ref{Contradiction1})$ and $(\ref{Contradiction2})$ contradict one another, which proves the theorem.

To prove $(\ref{Contradiction1})$, fix $n\geq1$, and let $L/K$ be a finite Galois extension which is large enough so that $\Zcal_n\cup\Pcal\subset\PP^1(L)$.  We have 
\begin{equation}\label{MainThmCalc1}
\begin{split}
\hhat_\phi(P) 	& = \frac{1}{|\Zcal_n||\Pcal|}\sum_{Z\in\Zcal_n}\sum_{Q\in \Pcal}(\hhat_\phi(Z) + \hhat_\phi(Q)) \\ 
		& = \frac{1}{[L:K]}\sum_{v\in M_K}\sum_{\sigma:L\hookrightarrow\CC_v}\Big\{\frac{1}{|\Zcal_n||\Pcal|}\sum_{Z\in\Zcal_n}\sum_{Q\in \Pcal}g_{\phi,v}(\sigma(Z),\sigma(Q))\Big\} \\ 
		& = \sum_{v\in M_K}\Big\{\frac{1}{|\Zcal_n||\Pcal|}\sum_{Z\in\Zcal_n}\sum_{Q\in \Pcal}g_{\phi,v}(\epsilon_v(Z),\epsilon_v(Q))\Big\}, 
\end{split}
\end{equation}
which implies $(\ref{Contradiction1})$, by the definition of $\Gamma_v(n)$ and $\Gamma(n)$.  The first identity in $(\ref{MainThmCalc1})$ uses the fact that each $Z\in\Zcal_n$ is preperiodic and therefore $\hhat_\phi(Z)=0$, and that $\hhat_\phi(Q)=\hhat_\phi(P)$ for all $Q\in\Pcal$; the second identity in $(\ref{MainThmCalc1})$ is by Proposition \ref{HeightProp} {\bf (c)};  the third identity in $(\ref{MainThmCalc1})$ follows from the fact that the sets $\Zcal_n$ and $\Pcal$ are $\Gal(\Kbar/K)$-invariant, and thus for each $v\in M_K$ the summand does not depend on the embedding $\sigma:L\hookrightarrow\CC_v$.

We now turn to the proof of $(\ref{Contradiction2})$.  First, fix a place $v\notin S$.  By the hypothesis that each $Z\in \Zcal_v$ is $S$-integral with respect to $P$, we have $r_v(\epsilon_v(Z))\neq r_v(\epsilon_v(Q))$ for each $Z\in \Zcal_v$ and $Q\in\Pcal$, where $r_v:\PP^1(\CC_v)\to\PP^1(k_v)$ is the reduction map; thus 
\begin{equation*}
g_{\phi,v}(\epsilon_v(Z),\epsilon_v(Q)) = -\log \delta_v(\epsilon_v(Z),\epsilon_v(Q)) = 0
\end{equation*}
since $\phi$ has good reduction at $v$ (Proposition \ref{HeightProp} {\bf (b)}).  It follows that $\Gamma_v(n)=0$ for all $v\notin S$.

Now consider a place $v\in S$.  We have
\begin{equation}\label{SigmaCalc}
\Gamma_v(n) = \frac{1}{|\Zcal_n|}\sum_{Z\in\Zcal_n}F(\epsilon_v(Z)),
\end{equation}
where
\begin{equation}\label{FDef}
\begin{split}
F(R) & = \frac{1}{|\Pcal|}\sum_{Q\in\Pcal}g_{\phi,v}(R,\epsilon_v(Q)).
\end{split}
\end{equation}
We would like to calculate the limiting value of $\Gamma_v(n)$ using $(\ref{SigmaCalc})$ and the equidistribution theorem, with $F$ as a testing function; however, the fact that $F$ has logarithmic singularities at the points of $\epsilon_v(\Pcal)$ means that it does not quite meet the hypotheses of Theorem \ref{EquidistThm}.  But $\epsilon_v(\Pcal)$ is contained in the $v$-adic Fatou set $\Fcal_v(\phi)$ by hypothesis, and therefore by Proposition \ref{DiscThm} no point in $\epsilon_v(\Pcal)$ is preperiodic or a limit of preperiodic points.  Moreover, each point in $\epsilon_v(\Pcal)$ is Fatou, and so by $(\ref{SuppFatou})$ it does not meet the support of the canonical measure $\mu_{\phi,v}$.  Since $\epsilon_v(\Pcal)$ is finite, it follows that $F$ can be replaced with a truncation $F^*$ meeting the following two requirements:
\begin{itemize}
	\item $F^*$ is bounded and continuous on $\Psf^1_v$; and
	\item $F^*(R)=F(R)$ for all preperiodic $R$ and for all $R$ in the support of $\mu_{\phi,v}$.
\end{itemize}
In particular $\int F^* d\mu_{\phi,v}=\int F d\mu_{\phi,v}=0$, by Proposition \ref{HeightProp} {\bf (a)}.  We deduce from Theorem \ref{EquidistThm} that
\begin{equation}\label{SigmaTruncCalc}
\Gamma_v(n) = \frac{1}{|\Zcal_n|}\sum_{Z\in\Zcal_n}F^*(\epsilon_v(Z)) \to \int F^* d\mu_{\phi,v} =0
\end{equation}
as $n\to+\infty$.

Finally, by $(\ref{SigmaTruncCalc})$ and the fact that $\Gamma_v(n)=0$ for all $v$ outside the fixed, finite set $S$, we have
\begin{equation*}
\lim_{n\to+\infty}\Gamma(n) = \lim_{n\to+\infty}\sum_{v\in S}\Gamma_v(n) = \sum_{v\in S}\{\lim_{n\to+\infty}\Gamma_v(n)\} = 0.
\end{equation*}
This proves $(\ref{Contradiction2})$, and completes the proof of the theorem.
\end{proof}


\medskip

\end{document}